\newtheorem{thm}{Theorem}[section]
\newtheorem{lma}{Lemma}[section]
\newcommand{\beqa}{\begin{eqnarray}}
\newcommand{\eeqa}{\end{eqnarray}}
\newcommand{\beq}{\begin{equation}}
\newcommand{\eeq}{\end{equation}}
\newcommand{\lbl}{\label}
\newcommand{\s}{\; \;}
\newcommand{\la}{\lambda}
\newcommand{\ra}{\rightarrow}
\newcommand{\R}{\mathbb{R}}
\newcommand{\De}{\Delta}
\newcommand{\lt}{\left}
\newcommand{\rt}{\right}
\title{Bounded solutions for a class of Hamiltonian systems}
\author{
Philip Korman\thanks{Supported in part by the Taft faculty grant at the University of Cincinnati  }   \hspace{.1cm} and Guanying Peng \\ 
Department of Mathematical Sciences \\ 
University of Cincinnati \\ 
Cincinnati Ohio 45221-0025 \\
}
\date{}
\begin{document}

\maketitle
\begin{abstract} 
We obtain bounded for all $t$ solutions of ordinary differential equations as limits of the solutions of the corresponding Dirichlet problems on $(-L,L)$, with $L \ra \infty$. We derive  a priori estimates for the Dirichlet problems, allowing passage to the limit, via a diagonal sequence. This approach carries over to the PDE case.
 \end{abstract}

\begin{flushleft}
Key words:  Bounded  for all $t$ solutions, a priori estimates. 
\end{flushleft}

\begin{flushleft}
AMS subject classification: 34B15, 35J47.
\end{flushleft}

\section{Introduction}
\setcounter{equation}{0}
\setcounter{thm}{0}
\setcounter{lma}{0}

For $-\infty<t<\infty$, we consider the equation 
\beq
\lbl{i1}
u''-a(t)u^3=f(t) \,,
\eeq
with continuous functions $a(t)>0$ and $f(t)$. 
Clearly, ``most" solutions of (\ref{i1}) blow up in finite time, for both increasing and decreasing $t$. By using two-dimensional shooting, S.P. Hastings and J.B. McLeod \cite{H} showed that the equation (\ref{i1}) has a uniformly bounded on $(-\infty, \infty)$ solution, in case of constant $a(t)$ and uniformly bounded $f(t)$. Their proof used some non-trivial topological property of a plane. We use a continuation method and passage to the limit as in P. Korman and A.C. Lazer \cite{KL} to obtain the existence of a uniformly bounded on $(-\infty, \infty)$ solution for (\ref{i1}), and for similar systems. 
We  produce a bounded solution as a limit of the solutions of the corresponding Dirichlet problems
\beq
\lbl{i2}
u''-a(t)u^3=f(t) \s\s \mbox{for $t \in (-L,L)$}, \s u(-L)=u(L)=0\,,
\eeq
as $L \ra \infty$. If $f(t)$ is bounded, it follows by the maximum principle that the solution of (\ref{i2}) satisfies a uniform in $L$ a priori estimate, which allows passage to the limit.

Then we use a variational approach motivated by P. Korman and A.C. Lazer \cite{KL} (see also P. Korman, A.C. Lazer and Y. Li \cite{KLL}), to get a similar result for a class of Hamiltonian systems. Again, we consider the corresponding Dirichlet problem on $(-L,L)$, which we solve by the minimization of the corresponding functional, obtaining in the process a uniform in $L$ a priori estimate, which allows passage to the limit as $L \ra \infty$.
\medskip

We used a similar approach to obtain uniformly bounded solutions  for a class of PDE systems of Hamiltonian type. The challenge was to adapt the elliptic estimates in case only the $L^{\infty}$ bound is known for the right hand side.

\section{A model equation}
\setcounter{equation}{0}
\setcounter{thm}{0}
\setcounter{lma}{0}

\begin{thm}\lbl{thm:1}
Consider the equation (for $u=u(t)$)
\beq
\lbl{1}
u''-a(t)u^3=f(t) \,,
\eeq
where the given functions $a(t) \in C(\R)$ and $f(t) \in C(\R)$ are assumed to satisfy 
\begin{equation*}
%\lbl{2}
  |f(t)| \leq M,\s \mbox{  for all $t \in \R$, and some constant $M>0$} \,,
\end{equation*}
and
\begin{equation*}
%\lbl{3}
a_0\leq a(t) \leq a_1, \s \mbox{ for all $t \in \R$, and  some  constants $a_1\geq a_0>0$} \,.
\end{equation*}
Then the problem (\ref{1}) has a classical solution uniformly bounded for all $t \in \R$, i.e., $|u(t)| \leq K$ for  all $t \in \R$, and some $K>0$. Such a solution is unique.
\end{thm}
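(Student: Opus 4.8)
The plan is to realize the bounded solution as a limit of solutions of the Dirichlet problems \eqref{i2} on $(-L,L)$, exactly as announced in the introduction. I would proceed in four steps.

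\emph{Step 1: Existence and a priori bound for the Dirichlet problem.} First I would show that for each $L>0$ the problem \eqref{i2} has a solution $u_L$, and that $|u_L(t)|\le K$ on $(-L,L)$ with $K$ independent of $L$. Existence follows from a degree/continuation argument (or the method of upper and lower solutions): the constant functions $\pm(M/a_0)^{1/3}$ are respectively a lower and an upper solution of \eqref{i2}, since at $t=\pm L$ they have the correct sign and $a(t)\bigl(M/a_0\bigr) \ge M \ge |f(t)|$ pointwise. This simultaneously gives the uniform bound $K=(M/a_0)^{1/3}$. Alternatively, one argues directly by the maximum principle: at an interior positive maximum $t_0$ of $u_L$ one has $u_L''(t_0)\le 0$, hence $a(t_0)u_L^3(t_0)=u_L''(t_0)-f(t_0)\le M$, so $u_L(t_0)\le (M/a_0)^{1/3}$; a symmetric argument at a negative minimum, together with the boundary condition, yields $\|u_L\|_{L^\infty(-L,L)}\le (M/a_0)^{1/3}$.

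\emph{Step 2: Local derivative estimates.} With the uniform $L^\infty$ bound in hand, the equation gives $|u_L''(t)| = |a(t)u_L^3(t)+f(t)| \le a_1 K^3 + M =: C_1$ on all of $(-L,L)$. Hence on any fixed interval $[-R,R]$ (with $L>R+1$, say) I get a uniform bound on $u_L'$ as well — e.g.\ from the mean value theorem on subintervals of length $1$ together with the $C^0$ bound, or by integrating $u_L''$. Thus $\{u_L\}$ is bounded in $C^2[-R,R]$, uniformly in $L$, for every fixed $R$; in particular $\{u_L\}$ and $\{u_L'\}$ are equicontinuous on $[-R,R]$.

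\emph{Step 3: Diagonal extraction and passage to the limit.} Take $L=n\to\infty$. By Arzel\`a--Ascoli and a diagonal argument over $R=1,2,3,\dots$, extract a subsequence $u_{n_k}$ converging in $C^1_{loc}(\R)$ to some $u\in C^1(\R)$ with $|u(t)|\le K$. Passing to the limit in the integrated form of the equation, $u_{n_k}'(t)=u_{n_k}'(0)+\int_0^t\bigl(a(s)u_{n_k}^3(s)+f(s)\bigr)\,ds$, shows $u$ solves \eqref{1} classically (and then $u\in C^2(\R)$ by the equation). The boundary conditions at $\pm L$ are simply discarded in the limit; the uniform bound survives. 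This produces the desired uniformly bounded classical solution.

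\emph{Step 4: Uniqueness.} Suppose $u,v$ are two bounded solutions and set $w=u-v$. Subtracting the equations, $w'' = a(t)\bigl(u^3-v^3\bigr) = a(t)\bigl(u^2+uv+v^2\bigr)w =: b(t)w$ with $b(t)\ge 0$ (the quadratic form $u^2+uv+v^2\ge 0$) and $b$ bounded. Since $w$ is bounded on $\R$, the maximum principle on long intervals forces $w\equiv 0$: if $\sup w>0$, pick $t_j$ with $w(t_j)\to\sup w$; on $[t_j-1,t_j+1]$ compare with the solution of $z''=b z$, or more simply note that $w$ cannot have an interior positive local maximum where $w''=bw\ge 0$ unless $w''=0$ there, and rule out the escape to $\pm\infty$ using boundedness — a standard argument shows a bounded solution of $w''=b(t)w$ with $b\ge 0$ must be identically zero. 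Hence $u=v$.

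The main obstacle is Step 1–3 packaged together, i.e.\ making the passage to the limit clean: one must be careful that the $L^\infty$ bound is genuinely independent of $L$ (this is where the sign condition $a>0$ and the cubic nonlinearity with the right sign are essential — they provide the one-sided control needed for the maximum principle) and that enough compactness is extracted to pass to the limit in the nonlinear term $a(t)u_L^3$. Everything else is routine ODE theory.
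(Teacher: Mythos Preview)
Your approach is essentially identical to the paper's: solve the Dirichlet problems on $(-L,L)$, obtain an $L$-independent $C^2$ bound via the maximum principle and the equation, and pass to the limit through a diagonal Arzel\`a--Ascoli argument. The only structural difference is in Step~1: the paper obtains existence for the Dirichlet problem by a $\lambda$-continuation (embedding $u''-\lambda a(t)u^3=f(t)$ and using the implicit function theorem, since the linearized operator $w''-3\lambda a u^2 w$ with Dirichlet data has only the trivial kernel), whereas you use upper/lower solutions $\pm(M/a_0)^{1/3}$. Both work; yours is slightly more direct and simultaneously yields the explicit bound $K=(M/a_0)^{1/3}$.

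One point in Step~4 needs tightening. The sentence ``a standard argument shows a bounded solution of $w''=b(t)w$ with $b\ge 0$ must be identically zero'' is false as written (take $b\equiv 0$, $w\equiv 1$), so you cannot appeal to it as a black box. The paper's argument is the clean one: since $b\ge 0$, $w$ is convex wherever $w>0$; if $w(t_0)>0$ and $w'(t_0)>0$ then $w'\ge w'(t_0)>0$ for $t\ge t_0$ and $w\to\infty$, contradicting boundedness; the case $w'(t_0)<0$ blows up as $t\to-\infty$; and if $w(t_0)>0$ with $w'(t_0)=0$ one reduces to the previous cases at a nearby point (or argues directly from convexity that $w$ is unbounded on at least one side). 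Treat $w<0$ symmetrically. This uses only $b\ge 0$ together with the boundedness of $w$, so your outline is salvageable once you replace the loose claim by this convexity argument.
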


\begin{proof}
We shall obtain a bounded solution as a limit of solutions to the corresponding Dirichlet problems
\beq
\lbl{4}
u''-a(t)u^3=f(t) \s\s \mbox{for $t \in (-L,L)$}, \s u(-L)=u(L)=0\,,
\eeq
as $L \ra \infty$. To prove the existence of solutions, we embed (\ref{4}) into a family of problems 
\beq
\lbl{4a}
u''-\la a(t)u^3=f(t) \s\s \mbox{for $t \in (-L,L)$}, \s u(-L)=u(L)=0\,,
\eeq
with $0 \leq \la \leq 1$. The  solution at $\la =0$, and other $\la$, can be locally continued in $\la$ by the implicit function theorem, since the corresponding linearized problem 
\[
w''(t)-3\la a(t)u^2(t)w(t)=0 \s\s \mbox{for $t \in (-L,L)$}, \s w(-L)=w(L)=0 
\]
has only the trivial solution $w(t) \equiv 0$, as follows by the maximum principle.   Multiplying (\ref{4a}) by $u$ and integrating, we get a uniform in $\la$ bound on $H^1$
norm of the solution, which implies the bound in $C^2$ (using Sobolev's embedding and the equation (\ref{4a}); this bound depends on $L$). It follows that the continuation can be performed for all $0 \leq \la \leq 1$. At $\la=1$, we get the desired solution of (\ref{4}).

We claim that there is a \emph{uniform in $L$ bound} in $C^2[-L,L]$ for any solution of (\ref{4}), i.e.,  there is a constant $K>0$, so that for all $t \in [-L,L]$, and all $L>0$,
\beq
\lbl{5}
|u(t)| \leq K \,, \s |u'(t)| \leq K \,,\s \mbox{and } \s |u''(t)| \leq K  \,.
\eeq
Indeed, if $t_0$ is a point of positive maximum of $u(t)$, then from the equation (\ref{4}) we get
\[
-a_0 u^3(t_0) \geq f(t_0)  \geq -M \,,
\]
which gives us an upper  bound on $u(t_0)$. Arguing similarly at  a point of negative minimum of $u(t)$, we get a lower   bound on $u(t)$, and then conclude the first inequality in (\ref{5}). From the equation (\ref{4}) we get a uniform bound on $|u''(t)|$. Note that for all $t\in\R$, we can write
\begin{equation}\label{2.7}
u(t+1) =u(t) +u'(t)+ \int^{t+1}_t (t+1-\xi) u'' (\xi)\,d\xi,
\end{equation}
from which we immediately deduce a uniform bound on $|u'(t)|$.

We now take a sequence $L_j \rightarrow \infty\,$, and 
denote by $u_j(t) \in H_0^1(-\infty,\infty)$  the bounded solution of the problem (\ref{4}) on the interval $(-L_j,L_j)$, extended as zero to the outside of the interval $(-L_j,L_j)$. For all $t_1<t_2$, writing
\beq
\lbl{8}
 \lt|u_j(t_2) - u_j(t_1)\rt| = \lt|\int^{t_2}_{t_1} u'_j \,dt\rt|\, \leq \sqrt{t_2 - t_1} 
\left( \int^{t_2}_{t_1} \lt(u_j'\rt)^2 \, dt \, \right)^{1/2} 
\eeq
\[
 \leq K \left( t_2 - t_1 \right) \,,
\]
in view of   (\ref{5}), we conclude that the sequence $\{u_j(t)\}$ is equicontinuous and uniformly 
bounded on every interval $[-L_{p},L_{p}]\,$.  By the Arzela-Ascoli theorem, it has a uniformly 
convergent subsequence on every $[-L_{p},L_{p}]\,$. 
So let $\{u^1_{j_k}\}$ be a subsequence of $\{u_j\}$ that converges uniformly on 
$[-L_1,L_1]\,$.  Consider this subsequence on $[-L_2,L_2]$ and select a 
further subsequence $\{u^2_{j_k}\}$ of $\{u^1_{j_k}\}$ that converges 
uniformly on $[-L_2,L_2]\,$.  We repeat this procedure for all $m$, and then 
take the diagonal sequence $\{u^k_{j_k}\}\,$.  It follows that it 
converges uniformly on any bounded interval to a function $u(t)\,$.

Expressing $\left(u^k_{j_k}\right)''$ from the equation (\ref{4}), we conclude that the 
sequence $\lt\{\left(u^k_{j_k}\right)''\rt\}\,$, and then also $\lt\{\left(u^k_{j_k}\right)'\rt\}$ (in view of (\ref{2.7})), converge 
uniformly on bounded intervals.  Denote $v(t) :=\lim _{k \ra \infty} \left(u^k_{j_k}\right)''(t)$. For $t$ belonging to any bounded interval $(a,b)$, similarly to (\ref{2.7}), we write 
\[
u^k_{j_k}(t) =u^k_{j_k}(a) +(t-a)\left(u^k_{j_k}\right)'(a)+ \int^t_a (t-\xi) \left(u^k_{j_k}\right)'' (\xi)\,d\xi \,, 
\]
and  conclude that $u(t) \in C^2(-\infty,\infty)$, and $u''(t)=v(t)$.  Hence, we can pass to 
the limit in the equation (\ref{4}), and  conclude that $u(t)$ solves this equation on $(-\infty,\infty)$. We have $|u(t)| \leq K$ on $(-\infty,\infty)$, proving the existence of a uniformly bounded solution. 

Turning to the uniqueness, the difference $w(t)$ of any two bounded solutions $u(t)$ and $\tilde{u}(t)$ of (\ref{1}) would be a bounded for all $t$ solution of  the linear equation
\beq
\lbl{9}
w''-b(t)w=0 \,,
\eeq
with $b(t)=a(t)(u^2+u\tilde{u}+\tilde{u}^2)>0$. It follows that $w(t)$ is convex when it is positive. If at some $t_0$, $w(t_0)>0$ and $w'(t_0)>0$ ($w'(t_0)<0$), then $w(t)$ is unbounded as $t \ra \infty$ ($t \ra -\infty$), a contradiction. A similar contradiction occurs if $w(t_0)<0$ for some $t_0$. Therefore, $w\equiv 0$.
\end{proof}

\noindent
{\bf Remark 1.} To prove the existence of solutions of (\ref{4}) , we could alternatively consider the corresponding variational functional $J(u): \, H^1_0(-L,L) \ra \R$, defined by
\[
J(u)=\int_{-L}^L \left[\frac{\lt({u'}\rt)^2}{2}+a(t) \frac{u^4}{4}+f(t)u \right] \, dt \,.
\]
Since for any $\epsilon>0$
\[
\lt|\int_{-L}^L f(t)u \, dt\rt| \leq \epsilon \int_{-L}^L u^2 \, dt+c(\epsilon) \int_{-L}^L f^2 \, dt 
\]
\[
\leq \epsilon \int_{-L}^L u^2 \, dt+c_1 \,, \s \mbox{with $c_1=c_1(L,\epsilon)$} \,,
\]
and 
$$\int_{-L}^L u^2 \, dt \leq c_2(L) \int_{-L}^L \lt(u'\rt)^2 \, dt,$$
we see (noting $a(t)u^4\geq 0$) that 
$$J(u) \geq c_3 \int_{-L}^L \lt(u'\rt)^2\, dt-c_4 $$ 
for some $c_3,c_4 >0$, so that $J(u)$  is bounded from below, coercive and convex in $u'$. Hence $J(u)$ has a minimizer in $H^1_0(-L,L)$, which gives us a classical solution of (\ref{4}), see e.g., L. Evans \cite{E}. However, to get a uniform in $L$ estimate of $\int_{-L}^L \lt(u'\rt)^2 \, dt$ (needed to conclude the equicontinuity in (\ref{8})), one would have to assume that $\int_{-\infty}^{\infty} f^2(t) \, dt < \infty$, giving a weaker result than above.
\medskip

We now discuss the dynamical significance of the bounded solution, established in Theorem \ref{thm:1}, let us call it $u_0(t)$. The difference of any two solutions of (\ref{1}) satisfies (\ref{9}).
 We see from (\ref{9}) that any two solutions of (\ref{1}) intersect at most once. Also from (\ref{9}), we can expect $u_0(t)$ to have one-dimensional stable manifold as $t \ra \pm \infty$. It follows that $u_0(t)$ provides the only possible asymptotic form of the solutions that are bounded as $t \ra \infty$ (or $t \ra -\infty$), while all other solutions become unbounded.
\medskip

Next we show that the conditions of this theorem cannot be completely removed. If $a(t) \equiv 0$, then for $f(t)=1$, all solutions of (\ref{1}) are unbounded as $t \ra \pm \infty$. The same situation  may occur in case $a(t)>0$, if $f(t)$ is unbounded. Indeed, the equation 
\beq
\lbl{9.1}
u''-u^3=2 \cos t-t \sin t-t^3 \sin ^3 t
\eeq
has a solution $u(t)=t \sin t$. Let $\tilde u(t)$ be any other solution of (\ref{9.1}). Then $w(t)=u(t)-\tilde u(t)$ satisfies (\ref{9}), with $b(t)=u^2+u\tilde u+\tilde{u}^2>0$.
Clearly, $w(t)$ cannot have points of positive local maximum, or negative local minimum.  But then $\tilde u(t)$ cannot remain bounded as $t \ra \pm \infty$, since in such a case the function $w(t)$ would be unbounded with points of positive local maximum and negative local minimum. It follows that all solutions of (\ref{9.1}) are unbounded as $t \ra \pm \infty$.
\medskip

The  approach of Theorem \ref{thm:1} is applicable to more general equations and systems. For example, we have the following theorem.
\begin{thm}\label{t2.2}
Consider the system (for $u=u(t)$ and $v=v(t)$)
\beqa
\lbl{**}
\begin{cases}
u''-a_1(t)f(u,v)=h_1(t), \\
v''-a_2(t)g(u,v)=h_2(t). 
\end{cases}
\eeqa
Assume that the functions $a_i(t) \in C(\R)$ satisfy $a_0\leq a_i(t)\leq a_1$ for all $t\in\R$ and some constants $0<a_0\leq a_1$, while $h_i(t) \in C(\R)$ are uniformly bounded, $i=1,2$.
Assume that the functions $f(x,y)$ and $g(x,y)$ are continuous on $\R^2$, and 
\begin{equation}\label{2.10} 
f(x,y) \ra \infty  \; (-\infty) \text{ as } x \ra \infty \; (-\infty), \mbox{uniformly in $y$},
\end{equation}
and 
\begin{equation}\label{2.11}
g(x,y) \ra \infty  \; (-\infty) \text{ as } y \ra \infty  \; (-\infty), \mbox{uniformly in $x$}. 
\end{equation}
Assume that
\beq
\lbl{+++}
xf(x,y) \geq \alpha \,, \s \mbox{and} \s yg(x,y) \geq \alpha \,,
\eeq
for some $\alpha \in \R$, and all $(x,y)\in\R^2$.  Assume finally that the quadratic form in $(w,z)$
\begin{equation}
\lbl{15}
a_1(t)f_x(x,y)w^2+\left(a_1(t)f_y(x,y)+a_2(t)g_x(x,y) \right)wz+a_2(t)g_y(x,y)z^2
\end{equation}
is positive semi-definite for all $t$, $x$ and $y$.
Then the problem (\ref{**}) has a classical solution uniformly bounded for all $t \in (-\infty,\infty)$. 
\end{thm}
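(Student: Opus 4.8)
The plan is to reproduce the proof of Theorem~\ref{thm:1} almost line by line, the only essential new ingredient being the treatment of the \emph{coupled} linearized system, where the semi-definiteness hypothesis (\ref{15}) enters. First I would, for each $L>0$, solve the Dirichlet problem for (\ref{**}) on $(-L,L)$ with $u(\pm L)=v(\pm L)=0$ by continuation in a parameter $\la\in[0,1]$ attached to the coupling terms, i.e.\ replacing $a_i(t)$ by $\la\,a_i(t)$ (here $f,g$ are tacitly $C^1$, as already required for (\ref{15}) to make sense). At $\la=0$ the decoupled problem $u''=h_1$, $v''=h_2$ has a unique solution, and solutions are continued by the implicit function theorem. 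The linearization at a solution $(u,v)$ of the $\la$-problem is
\[
w''-\la a_1(t)\big(f_x w+f_y z\big)=0,\s z''-\la a_2(t)\big(g_x w+g_y z\big)=0,\s w(\pm L)=z(\pm L)=0,
\]
with $f_x,f_y,g_x,g_y$ evaluated at $(u(t),v(t))$. Multiplying the first equation by $w$ and the second by $z$, integrating over $(-L,L)$, integrating by parts (the boundary terms vanish) and adding, I get
\[
\int_{-L}^{L}\big[(w')^2+(z')^2\big]\,dt=-\la\int_{-L}^{L}\big[a_1 f_x w^2+(a_1 f_y+a_2 g_x)wz+a_2 g_y z^2\big]\,dt\le 0,
\]
because $\la\ge0$ and the integrand on the right is exactly the quadratic form (\ref{15}) in $(w,z)$, which is positive semi-definite. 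Hence $w'\equiv z'\equiv0$, and the boundary conditions force $(w,z)\equiv(0,0)$, so the linearization is invertible and the continuation is locally possible at every $\la$.

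To push the continuation up to $\la=1$ I need, for fixed $L$, a bound on solutions of the $\la$-problem that is uniform in $\la$ (it may depend on $L$). Multiplying the two $\la$-equations by $u$ and $v$ respectively, integrating and adding yields an identity for $\int_{-L}^{L}[(u')^2+(v')^2]\,dt$ in which, by (\ref{+++}), the terms $\la a_1(t)\,u f(u,v)$ and $\la a_2(t)\,v g(u,v)$ are bounded below by a constant depending only on $\al$ and the bounds on $a_i$, while the $h_i$-terms are absorbed using Cauchy--Schwarz, the Poincar\'e inequality on $(-L,L)$ and Young's inequality. This gives an $L$-dependent but $\la$-independent bound on $\|u\|_{H^1_0}$ and $\|v\|_{H^1_0}$; Sobolev embedding then bounds $\|u\|_\infty,\|v\|_\infty$, the equations bound $|u''|,|v''|$, and the identity (\ref{2.7}) bounds $|u'|,|v'|$. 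So the continuation reaches $\la=1$, producing a solution $(u_L,v_L)$ of the Dirichlet problem on $(-L,L)$.

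Next I would establish the \emph{uniform in $L$} $C^2$ bound, exactly as in (\ref{5}). If $u_L$ has a positive maximum at an interior point $t_0$, then $u_L''(t_0)\le0$, so the first equation gives $a_1(t_0)f(u_L(t_0),v_L(t_0))=u_L''(t_0)-h_1(t_0)\le M$, hence $f(u_L(t_0),v_L(t_0))\le M/a_0$; by (\ref{2.10}) this bounds $u_L(t_0)$ from above, uniformly in $L$. The same argument at an interior negative minimum together with the $x\ra-\infty$ half of (\ref{2.10}) gives a lower bound, so $|u_L(t)|\le K$ on $[-L,L]$ uniformly in $L$; the second equation and (\ref{2.11}) bound $|v_L(t)|$ in the same way. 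Since $f,g$ are continuous, they are bounded on the resulting compact range of $(u_L,v_L)$, so the equations give a uniform bound on $|u_L''|,|v_L''|$ and (\ref{2.7}) a uniform bound on $|u_L'|,|v_L'|$. From here the passage to the limit is verbatim that of Theorem~\ref{thm:1}: choosing $L_j\ra\infty$ and extending by zero, the uniform $C^1$ bounds (equivalently the estimate (\ref{8})) give equicontinuity and uniform boundedness on bounded intervals, a diagonal subsequence converges uniformly on every bounded interval, its second derivatives converge uniformly there by expressing them from the equations, and the limit $(u,v)\in C^2(\R)\times C^2(\R)$ solves (\ref{**}) and satisfies $|u(t)|,|v(t)|\le K$ for all $t\in\R$.

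I expect the main obstacle to be the injectivity of the \emph{coupled} linearized system, since the zero-order coefficients $a_1 f_x$ and $a_2 g_y$ need not be sign-definite individually. This is exactly where hypothesis (\ref{15}) is indispensable: it is not a component-wise sign condition but precisely the statement that makes the energy identity above have the right sign. A secondary technical point is that the maximum-principle estimates must be applied to $u_L$ and to $v_L$ separately, at their own (in general distinct) extremal points; this causes no trouble because the limits (\ref{2.10}) and (\ref{2.11}) are uniform in the other variable. Finally, note that --- unlike in Theorem~\ref{thm:1} --- no uniqueness is claimed here, since the difference of two bounded solutions of (\ref{**}) need no longer satisfy a scalar equation amenable to the convexity argument used there.
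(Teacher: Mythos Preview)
Your proposal is correct and follows essentially the same route as the paper's own proof: continuation in $\la$ on $(-L,L)$ with the linearization handled via the energy identity and hypothesis (\ref{15}), a $\la$-uniform $H^1$ bound from (\ref{+++}), the $L$-uniform $C^2$ bound from (\ref{2.10})--(\ref{2.11}) applied componentwise at the respective extremal points, and the diagonal passage to the limit. Your write-up is in fact more explicit than the paper's (you spell out the energy identity and note why uniqueness is not claimed), but there is no substantive difference in strategy.
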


\begin{proof}
To prove the existence of solutions for the corresponding Dirichlet problem on $(-L,L)$,
\begin{equation}
\lbl{**+}
\begin{cases}
u''-a_1(t)f(u,v)=h_1(t) \s\s \mbox{for $t \in (-L,L)$}, \s u(-L)=u(L)=0, \\
v''-a_2(t)g(u,v)=h_2(t) \s\s \mbox{for $t \in (-L,L)$}, \s v(-L)=v(L)=0,
\end{cases}
\end{equation}
we embed it  into a family of problems
\begin{equation}
\lbl{12}
\begin{cases}
u''-\la a_1(t)f(u,v)=h_1(t) \s\s \mbox{for $t \in (-L,L)$}, \s u(-L)=u(L)=0,\\
v''-\la a_2(t)g(u,v)=h_2(t) \s\s \mbox{for $t \in (-L,L)$}, \s v(-L)=v(L)=0,
\end{cases}
\end{equation}
with $0 \leq \la \leq 1$. The implicit function theorem applies, since the corresponding linearized problem 
\beqa
\begin{cases}
w''-\la a_1(t)\left(f_x(u,v)w+f_y(u,v)z \right)=0 \s\s \mbox{for $t \in (-L,L)$}, \\\nonumber
z''-\la a_2(t)\left(g_x(u,v)w+g_y(u,v)z \right)=0 \s\s \mbox{for $t \in (-L,L)$},  \\ \nonumber
w(-L)=w(L)=z(-L)=z(L)=0 
\end{cases}
\eeqa
has only the trivial solution $w=z=0$. This follows by multiplying the first equation by $w$, the second one by $z$, integrating, adding the results, and using the condition (\ref{15}). Using (\ref{+++}), we obtain a uniform in $\lambda$ bound on the $H^1$ norm of the solution of (\ref{12}), so that the continuation can be performed for all $0 \leq \la \leq 1$. At $\la =1$, we obtain a solution of (\ref{**+}).
\medskip

From the first equation in (\ref{**+}) and the assumption (\ref{2.10}) we conclude the bound (\ref{5}) on $u(t)$, and a similar bound on $v(t)$ follows from the second equation in  (\ref{**+}) and the assumption (\ref{2.11}), the same way as we did for a single equation. Using the equations in (\ref{**+}), we obtain uniform bounds on $u''$ and $v''$, and the uniform bounds on $u'$ and $v'$ follow from (\ref{2.7}). Hence, we have the estimates (\ref{5}) for $u$ and $v$. We then let $L \ra \infty$, and pass to the limit along the diagonal sequence, as in the proof of Theorem \ref{thm:1}, to conclude the proof of Theorem \ref{t2.2}.
\end{proof}

\noindent
{\bf Example 1.} Theorem \ref{t2.2} applies in case $f(x,y)=x+x^{2n+1}+r(y)$, $g(x,y)=y+y^{2m+1}+s(x)$, with positive integers $n$ and $m$,  assuming that  the functions $r(y)$ and $s(x)$ are bounded and have small enough derivatives for all $x$ and $y$, and the functions $a_i(t)$ and $h_i(t)$, $i=1,2$, satisfy the assumptions of the theorem.

\section{Bounded solutions of Hamiltonian  systems}
\setcounter{equation}{0}
\setcounter{thm}{0}
\setcounter{lma}{0}

We use variational approach to get a similar result for a class of Hamiltonian  systems. 
We shall be  looking for uniformly bounded solutions $u\in H^1(\R;\R^m)$ of the system 
\beq
\label{3.1}
u_i''  - a(t) V_{z_i}(u_1,u_2,\ldots,u_m) = f_i(t) \,, \s \s i=1,\ldots,m \,.
\eeq
Here $u_i(t)$ are  the unknown functions,   $a(t)$ and $f_i(t)$ are  given functions on $\R$, $i=1,\ldots,m$, and $V(z)$ is a given function on $\R^m$.

\begin{thm}\label{t3}
Assume that $a(t) \in C(\R)$ satisfies $a_0\leq a(t)\leq a_1$ for all $t$, and some constants $0<a_0\leq a_1$. Assume that $f_i(t) \in C(\R)$, with $|f_i(t)| \leq M$  for some $M>0$ and all $i$ and $t \in \R$. Also assume that $V(z) \in C^1(\R^m)$ satisfies
\beq
\label{3.1a}
\lim _{z_i \ra \infty}V_{z_i}=\infty \;,\lim _{z_i \ra -\infty}V_{z_i}=-\infty \,, \s \mbox{uniformly in all $z_j \ne z_i$} \,,
\eeq
and
\beq
\lbl{3.2}
a(t)V(z) +\sum _{i=1}^m z_i f_i(t) \geq  -f_0(t) \,, \s \mbox{for all $t \in R$, and $z_i \in R$} \,,
\eeq
with some $f_0(t)>0$ satisfying $\int_{-\infty}^{\infty} f_0(t) \, dt<\infty$. Then the system (\ref{3.1}) has a uniformly bounded solution $u_i(t) \in H^1(\R)$, $i=1,\ldots,m$ (i.e., for some constant $K>0$, $|u_i(t)|<K$ for all $t \in \R$, and all $i$).
\end{thm}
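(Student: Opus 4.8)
The plan is to run the scheme of Theorems~\ref{thm:1} and \ref{t2.2}, but to solve the Dirichlet problems variationally, since the hypotheses (\ref{3.1a})--(\ref{3.2}) are designed for an energy argument rather than for a continuation/maximum-principle one (no definiteness of the Hessian of $V$ is assumed). On $(-L,L)$ I would introduce the functional
\[
J_L(u)=\int_{-L}^{L}\Bigl[\tfrac12\sum_{i=1}^{m}(u_i')^2+a(t)V(u)+\sum_{i=1}^{m}f_i(t)u_i\Bigr]\,dt,\qquad u\in H_0^1\bigl((-L,L);\R^m\bigr),
\]
whose Euler--Lagrange system is exactly (\ref{3.1}) on $(-L,L)$ with $u(-L)=u(L)=0$. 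Condition (\ref{3.2}) gives, pointwise in $t$, $a(t)V(u(t))+\sum f_i(t)u_i(t)\ge -f_0(t)$, hence
\[
J_L(u)\ \ge\ \tfrac12\int_{-L}^{L}\sum_{i=1}^{m}(u_i')^2\,dt-\int_{\R}f_0(t)\,dt,
\]
so $J_L$ is bounded below uniformly in $L$ and, by Poincar\'e's inequality on $H_0^1(-L,L)$, coercive; since $J_L$ is convex in $u'$ while the lower-order part is weakly continuous on $H_0^1$ (compact embedding into $C[-L,L]$), $J_L$ attains its infimum at some $u^L$, which, as in Remark~1, is a classical solution of the Dirichlet problem by elliptic regularity, cf.\ \cite{E}.

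Next I would prove the uniform-in-$L$ bounds (\ref{5}) for $u^L$ exactly as before. If $t_0\in(-L,L)$ is a point of positive maximum of $u_i^L$, then $(u_i^L)''(t_0)\le 0$, so (\ref{3.1}) gives $a(t_0)V_{z_i}(u^L(t_0))\le -f_i(t_0)\le M$, i.e.\ $V_{z_i}(u^L(t_0))\le M/a_0$; by (\ref{3.1a}), which is uniform in the other coordinates, this forces $u_i^L(t_0)\le K_1$ for a constant $K_1$ independent of $L$ and of the remaining components, and a point of negative minimum yields the matching lower bound (if $u_i^L$ has no interior positive maximum the bound is trivial from the boundary condition). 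Thus $|u_i^L|\le K$ on $[-L,L]$ for all $L$; the equations then give a uniform bound on $|(u_i^L)''|$ (continuity of $V_{z_i}$ on the compact box $[-K,K]^m$), and (\ref{2.7}), together with its backward analogue near $t=L$, gives a uniform bound on $|(u_i^L)'|$.

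The passage to the limit is then the diagonal Arzel\`a--Ascoli argument from the proof of Theorem~\ref{thm:1}: along a sequence $L_j\to\infty$ extract a subsequence $u^{(k)}$ converging in $C^1_{loc}(\R)$, note that $(u_i^{(k)})''=a(t)V_{z_i}(u^{(k)})+f_i$ then also converges locally uniformly, and identify the limit $u$ as a $C^2$ solution of (\ref{3.1}) on $\R$ with $|u_i(t)|\le K$. To place $u$ in $H^1(\R)$ I would feed in the variational information: comparing $u^L$ with the trivial competitor gives $J_L(u^L)\le J_L(0)=V(0)\int_{-L}^{L}a\,dt$, and, noting that (\ref{3.2}) with $z=0$ together with $\int f_0<\infty$ forces $V(0)\ge 0$ (with $V(0)=0$ in the model equation and in Example~1), the lower bound above yields $\int_{-L}^{L}\sum_i(u_i^L)'^2\,dt\le 2J_L(0)+2\int_{\R}f_0$, uniform in $L$ precisely because $\int_\R f_0<\infty$. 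Passing this through the $C^1_{loc}$ convergence on each $[-R,R]$ and letting $R\to\infty$ gives $u_i'\in L^2(\R)$; a corresponding uniform-in-$L$ control of $\int_{-L}^{L}u_i^2$ then completes $u\in H^1(\R;\R^m)$.

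I expect this last point — a uniform-in-$L$ bound on $\int_{-L}^{L}\sum_i u_i^2$, not merely on the Dirichlet energy — to be the main obstacle, since the pointwise bound $|u^L|\le K$ only gives $\int_{-L}^{L}u_i^2\le 2LK^2$, which degenerates as $L\to\infty$, and Poincar\'e on $(-L,L)$ carries a factor growing in $L$; it must instead be extracted from the minimizing property together with (\ref{3.2}) and the integrability of $f_0$ (using that (\ref{3.1a}) forces superlinear growth of $V$, hence coercivity of $a(t)V(z)+\sum z_i f_i(t)$ uniformly in $t$). The remaining ingredients — coercivity and weak lower semicontinuity of $J_L$, the maximum-principle pointwise bound, and the diagonal limit — are routine given the proofs of Theorems~\ref{thm:1} and \ref{t2.2}.
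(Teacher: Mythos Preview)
Your overall scheme is exactly the paper's: solve the Dirichlet problems on $(-L,L)$ by minimizing $J_L$, extract the uniform $L^\infty$ bound from (\ref{3.1a}) via the maximum principle, and pass to the limit by the diagonal Arzel\`a--Ascoli argument. One minor difference is that you obtain equicontinuity from a uniform $C^1$ bound (equation plus (\ref{2.7})), whereas the paper obtains it from a uniform $L^2$ bound on $u'$ via the inequality (\ref{8}); either works, and yours is arguably more direct for the mere existence of a bounded solution.

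There is, however, a gap in your route to the uniform Dirichlet-energy bound. The comparison $J_L(u^L)\le J_L(0)=V(0)\int_{-L}^{L}a(t)\,dt$ is uniform in $L$ only if $V(0)=0$; condition (\ref{3.2}) at $z=0$ together with $\int f_0<\infty$ forces $V(0)\ge 0$, as you observe, but not $V(0)=0$, and for $V(0)>0$ your right-hand side grows linearly in $L$. The paper's device avoids this entirely: after zero-extension one has $H_0^1(-L,L)\subset H_0^1(-\tilde L,\tilde L)$ for $\tilde L>L$, so the minimum value $M_L=\inf J_L$ is non-increasing in $L$, and hence $J_{L_j}(u^{L_j})=M_{L_j}\le M_1$ for every $L_j>1$. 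Combining this fixed upper bound with your lower bound $J_L(u)\ge \tfrac12\int\sum_i(u_i')^2-\int_\R f_0$ yields the desired uniform-in-$L$ estimate on $\int_{-L}^{L}\sum_i(u_i^{L})'^2$, with no condition on $V(0)$. Your final concern about a uniform bound on $\int u_i^2$ is well taken; the paper does not address it either, and the ``i.e.'' clause following ``$u_i\in H^1(\R)$'' in the statement suggests that what is actually established (and intended) is the uniform $L^\infty$ bound together with $u_i'\in L^2(\R)$, which the monotonicity trick does deliver.
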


\begin{proof}
As in the previous section, we approximate  solution of 
(\ref{3.1}) by solutions of the corresponding Dirichlet problems ($i=1,\ldots,m$)
\beq
u_i''  - a(t) V_{z_i}(u) = f_i(t) \,, \s\; \mbox{for} \; t\in (-L,L), \; 
u(-L) = u(L) = 0 \, ,
\label{3.11}
\eeq
as $L \ra \infty$. Solutions of (\ref{3.11}) can be obtained as  critical points of the corresponding variational functional $J(u): \left[H^1_0(-L,L)\right]^m \ra \R$ defined as
\[
J(u) := \int^{L}_{-L} \left[ \sum_{i=1}^m\left( \frac{1}{2} {u_i'}^2(t)  
+u_i(t)  f_i(t)\right) +a(t) V(u(t))\right] \, dt \,.
\]
By (\ref{3.2}), $J(u) \geq c_1(L)\sum_{i=1}^m ||u_i||_{H^1(-L,L)}-c_2$, for some positive constants $c_1$ and $c_2$, so that $J(u)$ is bounded from below, coercive and convex in $u'$. Hence, $J(u)$ has a minimizer in $\left[H^1_0(-L,L)\right]^m$, giving us a classical solution of (\ref{3.11}), see e.g., L. Evans \cite{E}.
\medskip

We now take a sequence $L_j \rightarrow \infty\,$, and 
denote by $u_j(t) \in H^1(\R;\R^m)$  a  vector solution of the problem (\ref{3.11}) on the interval $(-L_j,L_j)$, extended as zero vector to the outside of the interval $(-L_j,L_j)$. By our condition (\ref{3.1a}), we conclude a component-wise bound of  $|u_j(t)|$, uniformly in $j$ and $t$. The crucial observation (originated from \cite{KL}) is that the variational method provides a uniform in $j$ bound on $\lVert u'_j(t)\rVert_{L^2(-\infty,\infty)}$. Indeed, we have $H^1_0(-L,L) \subset H^1_0(-\tilde L,\tilde L)$ for $\tilde L>L$. If we now denote by $M_L$ the minimum value of $J(u)$ on $\left[H^1_0(-L,L)\right]^m$, then $M_L$ is non-increasing in $L$ (there are more competing functions for larger $L$), and in particular $J(u_j) \leq M_1$ if $L_j>1$. In view of the condition (\ref{3.2}), this provides us with a uniform in $j$ bound on $\int^{L_j}_{-L_j}  \sum_{i=1}^m \lt(u_{j,i}'(t)\rt)^2 \,dt$, from which  we conclude  that the sequence $\{u_j(t)\}$ is equicontinuous on every bounded interval (as in (\ref{8}) above). With the sequence $\{u_j(t)\}$  equicontinuous and uniformly bounded on every interval $[-L_p,L_p]\,$, it converges uniformly to some $u\in C(\R;\R^m)$ on $[-L_p,L_p]\,$. From the equation (\ref{3.11}), we have uniform convergence of $\{u_j''\}$, and hence uniform convergence of $\{u_j'\}$ follows from (\ref{2.7}). We complete the proof as in the proof of Theorem \ref{thm:1}.
\end{proof}

\noindent
{\bf Example 2.} Consider the case $m=2$, $V(z)=z_1^4+z_2^2+h(z_1,z_2)$, with $h(z_1,z_2)>0$ and $h_{z_1}(z)$, $h_{z_2}(z)$ bounded on $\R^2$. We consider the system
\begin{equation*}
\begin{cases}
u_1''-a(t) \left(4u_1^3+h_{z_1}(u) \right)=f_1(t),\\
u_2''-a(t) \left(2u_2+h_{z_2}(u) \right)=f_2(t),
\end{cases}
\end{equation*}
where the functions $a(t), f_1(t), f_2(t)$ satisfy the assumptions of Theorem \ref{t3}. Applying Young's inequality, we obtain
$$\lt|u_1(t) f_1(t)\rt| \leq \epsilon u_1^4(t)+c_1(\epsilon)   f_1^{4/3}(t), $$ 
and 
$$\lt|u_2(t) f_2(t)\rt| \leq \epsilon u_2^2(t)+c_2(\epsilon)  f_2^{2}(t). $$
Therefore, we get for some $c_3>0$
\begin{equation*}
a(t) \left(u_1^4+u_2^2+h(u_1,u_2) \right)+u_1(t) f_1(t)+u_2(t) f_2(t) \geq -c_3 \left( f_1^{4/3}(t)+f_2^{2}(t) \right).
\end{equation*}
Hence, Theorem \ref{t3} applies provided that $\int_{-\infty}^{\infty} \left( f_1^{4/3}(t)+f_2^{2}(t) \right)\, dt<\infty$.

\section{Bounded solutions of Hamiltonian PDE systems}
\setcounter{equation}{0}
\setcounter{thm}{0}
\setcounter{lma}{0}

In this section, we use a combination of the variational approach and elliptic estimates to show that similar results can be obtained for Hamiltonian PDE systems. We shall be  looking for uniformly bounded solutions $u=\lt(u_1,...,u_m\rt)\in H^1(\R^n;\R^m)$, for $n>1$, of the system 
\beq
\label{4.1}
\De u_i  - a(x) V_{z_i}(u) = f_i(x) \,, \s \s i=1,\ldots,m \,.
\eeq
Here $u_i(x)$ are  the unknown functions,   $a(x)$ and $f_i(x)$ are  given functions on $\mathbb{R}^n$, $i=1,\ldots,m$, and $V(z)$ is  a given function on $\R^m$. We shall denote the gradient of $a(x)$ by $Da(x)$.

\begin{thm}\label{t4.1}
Assume that $a(x), f_i(x) \in C^{\infty}(\R^n)$ and $V(z) \in C^\infty(\R^m)$. In addition, assume that there exist constants $0<a_0\leq a_1$ and $M>0$ such that $a_0\leq a(x) \leq a_1$ and $|f_i(x)|, |Da(x)|, |Df_i(x)| \leq M$ for all $x \in \R^n$ and $i=1,...,m$. Assume also that
\beq
\label{4.1a}
\lim _{z_i \ra \infty}V_{z_i}=\infty \;,\lim _{z_i \ra -\infty}V_{z_i}=-\infty \,, \s \mbox{uniformly in all $z_j \ne z_i$} \,,
\eeq
and
\beq
\lbl{4.2}
a(x)V(z) +\sum _{i=1}^m z_i f_i(x) \geq  -f_0(x) \,,
\eeq
for all $x\in\R^n$, $z\in \R^m$ and some function $f_0(x)>0$ satisfying $\int_{\R^n} f_0(x) \, dx<\infty$. Then the system (\ref{4.1}) has a uniformly bounded classical solution $u(x)$, with $u_i(x) \in C^2(\R^n)$, $i=1,\ldots,m$.
\end{thm}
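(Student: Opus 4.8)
The plan is to follow the scheme of Theorems \ref{t3} and \ref{thm:1}: solve the Dirichlet problem on the ball $B_L=\{x\in\R^n:|x|<L\}$, derive a priori bounds uniform in $L$, and let $L\ra\infty$ along a diagonal subsequence. For fixed $L$, I would look at the functional
\[
J_L(u):=\int_{B_L}\left[\sum_{i=1}^m\left(\tfrac12|\nabla u_i|^2+u_i f_i(x)\right)+a(x)V(u)\right]dx ,\qquad u\in[H_0^1(B_L)]^m .
\]
Exactly as in the proof of Theorem \ref{t3}, the hypothesis (\ref{4.2}) together with $\int_{\R^n}f_0<\infty$ and the Poincar\'e inequality shows that $J_L$ is bounded below and coercive on $[H_0^1(B_L)]^m$; since the Lagrangian is convex in $\nabla u$, the direct method gives a minimizer $u^L$ (see \cite{E}), which is a weak, and hence (all data being $C^\infty$, by interior elliptic regularity) a classical $C^\infty(B_L)$, solution of (\ref{4.1}) in $B_L$ with $u^L|_{\partial B_L}=0$.

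Next I would establish two estimates uniform in $L$. (i) A componentwise sup bound: if $u^L_i$ attains its maximum over $\overline{B_L}$ at an interior point $x_0$, then $\De u^L_i(x_0)\le 0$, so $V_{z_i}(u^L(x_0))\le -f_i(x_0)/a(x_0)\le M/a_0$; by (\ref{4.1a}) there is a $K$, independent of $L$ and of the other components, with $V_{z_i}(z)>M/a_0$ whenever $z_i>K$, which forces $u^L_i(x_0)\le K$ (a maximum on $\partial B_L$ is $\le 0$). Arguing symmetrically at the minimum gives $\|u^L_i\|_{L^\infty(B_L)}\le K$ for all $i$ and all $L$. (ii) A uniform Dirichlet energy bound: since $H_0^1(B_L)\subset H_0^1(B_{L'})$ for $L<L'$, the minimal values $M_L:=\min J_L$ are non-increasing in $L$, so $J_L(u^L)\le M_1$ for $L\ge 1$; combining this with (\ref{4.2}) and $\int_{\R^n}f_0<\infty$ yields $\tfrac12\int_{B_L}|\nabla u^L|^2\le M_1+\|f_0\|_{L^1(\R^n)}$, uniformly in $L$.

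The \emph{main obstacle} is the regularity upgrade, which is where the PDE case genuinely departs from Section 2: from the equation we only get $\De u_i^L=a V_{z_i}(u^L)+f_i\in L^\infty(B_L)$ uniformly in $L$ (using $|u^L|\le K$ and $V\in C^\infty$), and an $L^\infty$ bound on the Laplacian does not by itself control $u^L$ in $C^2$. I would resolve this in two steps on a fixed ball $B_R$, for $L>2R$: first, interior $L^p$ (Calder\'on--Zygmund) estimates give $\|u_i^L\|_{W^{2,p}(B_R)}\le C(R,p)$ for every $p<\infty$, uniformly in $L$, hence by Sobolev embedding $\|u_i^L\|_{C^{1,\al}(B_R)}\le C(R,\al)$ for every $\al\in(0,1)$; second, since $u^L$ is now uniformly $C^{0,\al}$ and $V_{z_i}$ is smooth, $V_{z_i}(u^L)$ is uniformly bounded in $C^{0,\al}(B_R)$ (a Lipschitz image, on the compact set $\{|z|\le K\sqrt m\}$, of a H\"older function), while $a,f_i\in C^\infty$ lie in $C^{0,\al}(B_R)$, so $\De u_i^L$ is uniformly bounded in $C^{0,\al}(B_R)$; interior Schauder estimates then give $\|u_i^L\|_{C^{2,\al}(B_{R/2})}\le C(R)$, uniformly in $L$. (The bounds on $|Da|$ and $|Df_i|$ make these estimates translation invariant, so they hold uniformly over all balls of a given radius in $\R^n$.)

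Finally, taking $L_j\ra\infty$ and using the compact embedding $C^{2,\al}(\overline{B_R})\hookrightarrow C^2(\overline{B_R})$ together with the diagonal procedure of Theorem \ref{thm:1} over an exhaustion $B_1\subset B_2\subset\cdots$, I would extract a subsequence converging in $C^2_{\mathrm{loc}}(\R^n)$ to some $u\in C^2(\R^n;\R^m)$ with $|u_i|\le K$. Passing to the limit in (\ref{4.1}) on each $B_R$ (using the uniform $C^2$ convergence and the continuity of $V_{z_i}$) shows that $u$ solves the system on all of $\R^n$; a standard bootstrap upgrades $u$ to $C^\infty$, and Fatou applied to the bound in (ii) gives in addition $\nabla u\in L^2(\R^n)$.
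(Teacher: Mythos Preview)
Your proposal is correct and follows the same overall architecture as the paper: variational solution of the Dirichlet problem on $B_L$, a uniform $L^\infty$ bound via the maximum principle and (\ref{4.1a}), a regularity upgrade to uniform interior $C^{2,\alpha}$ estimates, and a diagonal extraction in $C^2_{\mathrm{loc}}$. The only substantive difference is in the regularity upgrade (the paper's Lemma~\ref{l4.2}): the paper first obtains a uniform gradient bound by writing $u_i$ on each unit ball as the sum of the Newtonian potential of the (uniformly $L^\infty$-bounded) right-hand side and a harmonic remainder, and then invokes the interior Schauder estimate (Gilbarg--Trudinger, Theorem~4.6) once the right-hand side is seen to be uniformly Lipschitz (here is where $|Da|,|Df_i|\le M$ enter), hence $C^{0,\alpha}$; you instead pass through interior Calder\'on--Zygmund $L^p$ estimates and Sobolev embedding to reach $C^{1,\alpha}$ before applying Schauder. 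Both routes are valid---yours is the more packaged modern argument, the paper's avoids CZ theory entirely. Your additional observation that $\nabla u\in L^2(\R^n)$ (from the monotonicity of the minimal values $M_L$ and Fatou) is correct but is not used in the paper's proof, where equicontinuity comes directly from the $C^{2,\alpha}$ bounds rather than from an energy estimate.
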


As in the proof of Theorem \ref{t3}, we approximate solutions of the system (\ref{4.1}) by solutions of the following system
\begin{equation}\label{4.3}
\begin{cases}
\De u_i(x)  - a(x) V_{z_i}\lt(u(x)\rt) = f_i(x) \quad\mbox{ for } x\in B_{L}(0), \; \\
u_i(x) = 0  \quad\text{ for } x\in\partial B_L(0) \,,
\end{cases}
\end{equation}
where $B_{L}(0)=\{ x \in R^n \, : \, |x|<L \}$.

\begin{lma}\label{l4.1}
Assume that $a(x), f_i(x) \in C^{\infty}(\R^n)$ and $V(z) \in C^\infty(\R^m)$. In addition, assume that the condition (\ref{4.2}) is satisfied. Then the system (\ref{4.3}) has a classical solution $u_L=\lt(u_{L,1},...,u_{L,m}\rt)\in C^{2}(\overline{B_L(0)};\R^m)$.
\end{lma}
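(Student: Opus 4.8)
The plan is to produce $u_L$ as a minimizer of the variational functional attached to (\ref{4.3}), exactly as in the proof of Theorem \ref{t3}. Define $J:\lt[H^1_0(B_L(0))\rt]^m\ra\R$ by
\[
J(u)=\int_{B_L(0)}\lt[\sum_{i=1}^m\lt(\frac{1}{2}|\nabla u_i|^2+u_i(x)f_i(x)\rt)+a(x)V(u(x))\rt]dx\,.
\]
By (\ref{4.2}) the integrand is bounded below by $\frac12\sum_i|\nabla u_i|^2-f_0(x)$, and $f_0\in L^1(B_L(0))$, so $J(u)\geq\frac12\sum_i\|\nabla u_i\|_{L^2(B_L(0))}^2-\|f_0\|_{L^1(B_L(0))}$; by Poincar\'e's inequality $J$ is bounded below and coercive on $\lt[H^1_0(B_L(0))\rt]^m$, while $\inf J\leq J(0)<\infty$. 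The Dirichlet part of $J$ is convex and strongly continuous, hence weakly lower semicontinuous; for a minimizing sequence $u_j\rightharpoonup u$ in $\lt[H^1_0\rt]^m$, Rellich's theorem gives, along a subsequence, $u_j\ra u$ in $\lt[L^2\rt]^m$ and a.e., so that the nonnegative functions $\sum_i u_{j,i}f_i+aV(u_j)+f_0$ converge a.e.\ and Fatou's lemma takes care of the remaining term. Hence a minimizer $u_L\in\lt[H^1_0(B_L(0))\rt]^m$ exists, with $J(u_L)<\infty$, in particular $V(u_L)\in L^1(B_L(0))$.

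It remains to upgrade $u_L$ to a classical solution, and this is where, as noted in the Introduction, one has to adapt the elliptic estimates to a right-hand side for which only an $L^\infty$ bound is available. Granting for the moment that $u_L\in L^\infty(B_L(0))$, the first variation is legitimate: for $\varphi\in C_c^\infty(B_L(0);\R^m)$, differentiating $t\mapsto J(u_L+t\varphi)$ at $t=0$ (dominated convergence applies since $V\in C^\infty$ and all relevant arguments stay in a fixed bounded set) shows that $u_L$ solves (\ref{4.3}) weakly. Then the right-hand side $f_i+a(x)V_{z_i}(u_L)$ lies in $L^\infty(B_L(0))\subset L^p(B_L(0))$ for every $p<\infty$, so interior $W^{2,p}$ estimates give $u_L\in C^{1,\al}_{\mathrm{loc}}(B_L(0))$ for all $\al<1$; now $f_i+aV_{z_i}(u_L)\in C^{0,\al}_{\mathrm{loc}}$ since $a,f_i,V$ are smooth and $u_L$ is continuous, so Schauder estimates upgrade $u_L$ to $C^{2,\al}_{\mathrm{loc}}$, and, iterating, to $C^\infty$ in the interior. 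Since $\partial B_L(0)$ is smooth, the boundary values vanish and the coefficients are smooth, the corresponding global $W^{2,p}$ and Schauder estimates up to the boundary give $u_L\in C^{2,\al}(\overline{B_L(0)};\R^m)$, which is the assertion.

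The main obstacle is therefore the a priori bound $\|u_L\|_{L^\infty(B_L(0))}\leq R_0$, which is exactly what also makes the first variation and the whole bootstrap run; for $m>1$ one should not expect it from the coercivity alone, so I would obtain it by truncation. Choose $V_k\in C^\infty(\R^m)$ with $V_k=V$ on $\{|z|\leq k\}$, of at most polynomial growth, but still with $V_{k,z_i}$ nondecreasing in $z_i$ and $V_{k,z_i}(z)$ large for $|z_i|$ large; minimizing the corresponding functional $J_k$ (now of class $C^1$, with a right-hand side of controlled growth) yields a solution $u^k$ of the truncated Dirichlet problem, classical by the bootstrap above. Evaluating the truncated equation at an interior positive maximum of $u^k_i$, where $\De u^k_i\leq0$ and hence $a(x)V_{k,z_i}(u^k)\leq -f_i(x)\leq\sup_{\overline{B_L(0)}}|f_i|$, and using the growth of $V_{z_i}$ at infinity (the hypothesis (\ref{4.1a}), in force wherever this lemma is applied), one gets $u^k_i\leq R_0$ with $R_0$ independent of $k$; arguing symmetrically at a negative minimum gives $\|u^k\|_{L^\infty(B_L(0))}\leq R_0$. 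For $k>R_0$ the function $V_k$ agrees with $V$ on the range of $u^k$, so $u^k$ already solves (\ref{4.3}); the $C^{2,\al}(\overline{B_L(0)})$ estimates on $u^k$ are then uniform in $k$, and a subsequence converges in $C^2(\overline{B_L(0)})$ to the desired $u_L$. I expect the technical heart of the argument to be precisely this step: choosing the truncation $V_k$ so that the variational structure of $J_k$ and the pointwise maximum-principle bound survive simultaneously, together with carrying out the up-to-the-boundary elliptic estimates from a merely bounded right-hand side.
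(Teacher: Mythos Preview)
Your overall strategy --- minimize the natural energy on $[H^1_0(B_L(0))]^m$ and then bootstrap elliptic regularity --- is exactly the paper's. The paper's proof is much terser: after deducing coercivity from (\ref{4.2}) it simply cites Evans (Theorem~2, \S8.2.2) for the existence of a minimizer solving (\ref{4.3}) weakly, then invokes Theorem~8.13 of \cite{GT} to go from $H^1_0$ to $H^3$, and bootstraps via Sobolev embedding to $C^2(\overline{B_L(0)})$.

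You are right to flag the issue the paper glosses over: with no growth hypothesis on $V$ and no a~priori $L^\infty$ bound on the minimizer, neither the Euler--Lagrange equation (Evans' theorem requires polynomial growth of the Lagrangian) nor the first step of the bootstrap (one needs $aV_{z_i}(u_L)+f_i$ in some $L^p$ or $H^k$) is immediately justified. Your truncation-plus-maximum-principle argument is a sound way to repair this, and what you get is more honest than the paper's version.

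Two small points. First, the truncation step imports (\ref{4.1a}), which is not among the hypotheses of Lemma~\ref{l4.1} as stated; you acknowledge this, and in the only place the lemma is used (Theorem~\ref{t4.1}) the extra hypothesis is available, so this is harmless but worth saying explicitly. Second, asking that ``$V_{k,z_i}$ be nondecreasing in $z_i$'' is stronger than you need and cannot be enforced in general (nothing says $V$ is convex in each variable); the maximum-principle step only uses that $V_{k,z_i}(z)\to+\infty$ as $z_i\to+\infty$ uniformly in the other coordinates and in $k$, which can indeed be arranged while keeping subcritical polynomial growth and a (\ref{4.2})-type lower bound. Once $k$ exceeds the resulting $R_0$, $u^k$ already solves (\ref{4.3}), so no limit in $k$ is needed.
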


\begin{proof}
We consider the following variational approach: the functional 
\begin{equation*}
J(u) := \int_{B_L(0)} \left[ \sum_{i=1}^m\left( \frac{1}{2} \lt|\nabla u_i\rt|^2  
+u_i(x)  f_i(x)\right) +a(x) V(u(x))\right] \, dx \,
\end{equation*}
is minimized over $H^1_0(B_L(0);\R^m)$. From the condition (\ref{4.2}), we have 
\begin{equation*}
J(u)\geq c_1(L)\lVert u\rVert_{H^1(B_L(0);\R^m)}^2-c_2
\end{equation*}
for some positive constants $c_1, c_2$. Therefore, $J$ is bounded below, coercive and convex in $\nabla u$. Hence, it has a minimizer $u_L\in H^1_0(B_L(0);\R^m)$ that satisfies the system (\ref{4.3}). (See Theorem 2 in Section 8.2.2 of \cite{E}.) Now $u_L$ solves the following elliptic system
\begin{equation*} %\label{4.4}
\begin{cases}
\De u_{L,i}  = a(x) V_{z_i}(u_L) + f_i(x) \quad\text{ in } B_L(0),\\
u_{L,i} = 0  \quad\text{ on } \partial B_L(0).
\end{cases}
\end{equation*}
For any $i$, since $a, f_i$ and  $V$ are all smooth and $u_L\in H^1_0$, it follows from standard elliptic estimates that $u_{L,i}\in H^3(B_L(0))$, and therefore $u_L\in H^3(B_L(0);\R^m)$. (See Theorem 8.13 in \cite{GT}.) By a bootstrapping argument and the Sobolev embedding theorem, one has $u_{L,i}\in C^{2}(\overline{B_L(0)})$ for all $i$ and hence $u_L$ is a classical solution to (\ref{4.3}).
\end{proof}

In the next lemma, we apply interior estimates for classical solutions of the Poisson equation to the function $u_L$ found in Lemma \ref{l4.1}. We introduce some notations from \cite{GT}. Let $\Omega\in\R^n$ be a bounded domain and $u\in C^{2,\alpha}(\Omega)$ for some $0<\alpha<1$. We set
\begin{equation*}
|D^k u|_{0;\Omega} := \sup_{|\beta|=k}\sup_{\Omega}|D^{\beta}u|, \quad k=0,1,2,
\end{equation*}
and
\begin{equation*}
[D^k u]_{\alpha;\Omega} := \sup_{|\beta|=k}\sup_{x,y\in\Omega,x\ne y}\frac{\lt|D^{\beta}u(x)-D^{\beta}u(y)\rt|}{|x-y|^{\alpha}}.
\end{equation*}

\begin{lma}\label{l4.2}
Given $L>2$ and $0<\alpha<1$, under the assumptions of Theorem \ref{t4.1}, there exists a constant $K$ independent of $L$ such that the function $u_L$ found in Lemma \ref{l4.1} satisfies
\beq\label{4.14}
|u_{L}|_{0;\overline{B_{L}(0)}}, |Du_{L}|_{0;\overline{B_{L'}(0)}},|D^2u_{L}|_{0;\overline{B_{L''}(0)}},[D^2u_{L}]_{\alpha;\overline{B_{L''}(0)}} \leq K,
\eeq
where $L'=L-1$ and $L''=L-2$.
\end{lma}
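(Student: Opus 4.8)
The plan is to establish the four bounds in (\ref{4.14}) in the order listed, each one feeding into the next via the standard interior Schauder estimates for the Poisson equation (Theorem 4.6 and Corollary 4.7 in \cite{GT} in the scaled form, or the localized version in their Chapter 6). First I would bound $|u_L|_{0;\overline{B_L(0)}}$ uniformly in $L$. This is the maximum principle argument already used for the ODE: at an interior point $x_0$ of positive maximum of a component $u_{L,i}$ one has $\De u_{L,i}(x_0)\leq 0$, so $a(x_0)V_{z_i}(u_L(x_0))+f_i(x_0)\leq 0$, hence $V_{z_i}(u_L(x_0))\leq M/a_0$; by the coercivity assumption (\ref{4.1a}) (uniformity in the other variables is exactly what makes this work), $u_{L,i}(x_0)$ is bounded above by a constant depending only on $a_0$, $M$, and $V$. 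The symmetric argument at a negative minimum, together with the zero boundary data, gives $|u_L|_{0;\overline{B_L(0)}}\leq K_0$ with $K_0$ independent of $L$.

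Next, having the $L^\infty$ bound on $u_L$, I would regard each equation $\De u_{L,i} = g_i(x) := a(x)V_{z_i}(u_L(x)) + f_i(x)$ as a Poisson equation with a right-hand side $g_i$ that is bounded in $L^\infty$ by a constant independent of $L$ (since $a$ is bounded, $|f_i|\leq M$, and $V_{z_i}$ is continuous, hence bounded on the compact range of $u_L$). Interior gradient and $C^{1,\alpha}$ estimates for the Poisson equation then give, on $\overline{B_{L'}(0)}$ with $L' = L-1$, a bound $|Du_L|_{0;\overline{B_{L'}(0)}}\leq K_1$ depending only on $\|g_i\|_{L^\infty}$, $K_0$, the dimension $n$, and the unit distance to the boundary — crucially not on $L$, because interior estimates on a ball of radius $L$ restricted to the concentric ball of radius $L-1$ only see the distance $1$ to the boundary. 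For the last two bounds I need the right-hand side $g_i$ to be $C^\alpha$ (indeed $C^{0,1}$): write $Dg_i = (Da)V_{z_i}(u_L) + a\sum_j V_{z_iz_j}(u_L)Du_{L,j} + Df_i$, which is bounded on $\overline{B_{L'}(0)}$ by a constant independent of $L$ using $|Da|,|Df_i|\leq M$, the $L^\infty$ bound on $u_L$, the just-obtained gradient bound on $B_{L'}$, and smoothness of $V$. Thus $g_i$ is Lipschitz, in particular $C^\alpha$, with seminorm bounded independently of $L$ on $B_{L'}(0)$; interior Schauder estimates for $\De u_{L,i} = g_i$ applied on $B_{L'}(0)$ and restricted to $B_{L''}(0)$ with $L'' = L-2$ then yield $|D^2 u_L|_{0;\overline{B_{L''}(0)}}, [D^2 u_L]_{\alpha;\overline{B_{L''}(0)}}\leq K$, again with $K$ depending only on $n$, $\alpha$, the distance $1$ from $B_{L''}$ to $\partial B_{L'}$, and the already-established $L$-independent bounds. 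Taking the maximum of $K_0, K_1$ and these constants gives the single $K$ claimed.

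The main obstacle, and the only point requiring genuine care rather than bookkeeping, is making sure that \emph{every} constant that enters is genuinely independent of $L$: this is exactly why the statement peels off one unit of radius at each differentiation step ($L\to L' = L-1\to L'' = L-2$) and why one must use the \emph{interior} form of the elliptic estimates (on a fixed-size sub-ball, with the constant depending on the distance to the boundary, which here is always $1$), rather than global estimates up to $\partial B_L(0)$ whose constants would degenerate or depend on $L$. A secondary technical point is that the Schauder estimate at the second step requires $g_i\in C^\alpha$, which is why one first establishes the gradient bound on the larger ball $B_{L'}(0)$ and only then differentiates $g_i$; one could alternatively invoke $L^p$ interior estimates to put $u_L\in W^{2,p}_{loc}$ first, but the Lipschitz-in-$u_L$ route above is cleaner and self-contained given the smoothness hypotheses. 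Finally, since the hypotheses assume $a, f_i, V$ are $C^\infty$, the classical solution $u_L$ from Lemma \ref{l4.1} is smooth enough for all these differentiations to be legitimate, so no additional regularity bootstrapping beyond what Lemma \ref{l4.1} already provides is needed here.
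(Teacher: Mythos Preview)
Your proposal is correct and follows essentially the same approach as the paper: a maximum-principle argument using (\ref{4.1a}) for the uniform $L^\infty$ bound, then an interior gradient estimate for the Poisson equation with bounded right-hand side on unit-scale sub-balls, and finally interior Schauder estimates (Theorem~4.6 in \cite{GT}) once $g_i=a V_{z_i}(u_L)+f_i$ is seen to be Lipschitz (hence $C^\alpha$) on $B_{L'}(0)$. The only cosmetic difference is that the paper spells out the gradient step explicitly via the Newtonian potential decomposition $u_i=w+v$ with $v$ harmonic on $B_1(\bar x)$, whereas you cite the interior gradient estimate as a black box; the substance and the crucial observation---that all constants depend only on the fixed distance $1$ to the boundary and not on $L$---are identical.
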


\begin{proof}
We fix an arbitrary index $i\in\{1,...,m\}$, and omit the subscript $L$. Therefore, we denote $u=u_L$ and $u_i=u_{L,i}$. Suppose $x_0\in B_L(0)$ is such that $u_i(x_0)$ is a positive maximum of $u_i$. Then since $\De u_i(x_0)\leq 0$, it follows from (\ref{4.3}) that 
\begin{equation*}
a(x_0) V_{z_i}(u(x_0)) + f_i(x_0) \leq 0
\end{equation*}
and hence 
\begin{equation}\label{4.5}
V_{z_i}(u(x_0)) \leq \frac{M}{a_0}.
\end{equation}
The assumption (\ref{4.1a}) and (\ref{4.5}) then guarantee that $u_i(x_0)$ is bounded from above independent of $L$. Similarly, we have the minimum of $u_i$ is bounded from below independent of $L$. Since this holds for all $i$, we deduce
\beq\label{4.10}
|u|_{0;\overline{B_{L}(0)}} \leq K_0
\eeq
for some $K_0$ independent of $L$. 

We denote $F_i(u,x):= a(x) V_{z_i}(u) + f_i(x)$. It follows from Lemma \ref{l4.1} and (\ref{4.10}) that $F_i\in C^2(\overline{B_L(0)})$ and $|F_i(u,x)|_{0;\overline{B_L(0)}}$ is bounded independent of $L$. Let $\bar{x}\in \overline{B_{L'}(0)}$ and $w$ be the Newtonian potential of $F_i$ on $B_{1}(\bar{x})$, then it is clear that $u_i=w+v$ for some harmonic function $v$ on $B_{1}(\bar{x})$. For all $x\in B_{1}(\bar{x})$ we have
\begin{equation*}
w(x)=\int_{B_{1}(\bar{x})}\Gamma(x-y) F_i(u(y),y)dy
\end{equation*}
and 
\begin{equation*}
Dw(x)=\int_{B_{1}(\bar{x})}D\Gamma(x-y) F_i(u(y),y)dy,
\end{equation*}
where $\Gamma$ is the fundamental solution of the Laplacian in $\R^n$ (see \cite{GT} Lemma 4.1). Using properties of $\Gamma$ and uniform boundedness of $F_i$, it is easy to check that
\begin{equation}\label{4.9}
|w|_{0;B_{1}(\bar{x})} \leq C|F_i|_{0;B_{1}(\bar{x})} \text{ and } |Dw|_{0;B_{1}(\bar{x})} \leq C|F_i|_{0;B_{1}(\bar{x})}
\end{equation}
for some constant $C$ depending only on $n$. Therefore we have
\begin{equation}\label{4.6}
|v|_{0;B_{1}(\bar{x})} \leq |u_i|_{0;B_{1}(\bar{x})} + |w|_{0;B_{1}(\bar{x})} \leq C\lt(|u_i|_{0;B_{1}(\bar{x})}+|F_i|_{0;B_{1}(\bar{x})}\rt).
\end{equation}
Using interior estimates for harmonic functions (see \cite{GT} Theorem 2.10), we have
\begin{equation}\label{4.7}
|Dv|_{0;B_{\frac{1}{2}}(\bar{x})} \leq C |v|_{0;B_{1}(\bar{x})}
\end{equation}
for some constant $C$ depending only on $n$, since for any $x\in B_{\frac{1}{2}}(\bar{x})$, we have $dist(x,\partial B_{1}(\bar{x})) \geq \frac{1}{2}$. Now combining (\ref{4.6})-(\ref{4.7}) we obtain
\begin{equation*} %\label{4.8}
|Dv|_{0;B_{\frac{1}{2}}(\bar{x})} \leq C\lt(|u_i|_{0;B_{1}(\bar{x})}+|F_i|_{0;B_{1}(\bar{x})}\rt)
\end{equation*}
for some constant $C$ depending only on $n$. This along with (\ref{4.9}) yields
\begin{equation*}
|Du_i|_{0;B_{\frac{1}{2}}(\bar{x})} \leq C\lt(|u_i|_{0;B_{1}(\bar{x})}+|F_i|_{0;B_{1}(\bar{x})}\rt) \leq C\lt(|u_i|_{0;B_{L}(0)}+|F_i|_{0;B_{L}(0)}\rt)
\end{equation*}
for some constant $C$ depending only on $n$. Now since $\bar{x}$ is arbitrary in $\overline{B_{L'}(0)}$, it follows that
\begin{equation*}
|Du_i|_{0;\overline{B_{L'}(0)}} \leq C\lt(|u_i|_{0;B_{L}(0)}+|F_i|_{0;B_{L}(0)}\rt).
\end{equation*}
In particular, since $|u_i|_{0;B_L(0)}$ and $|F_i|_{0;B_L(0)}$ are bounded independent of $L$, we obtain a uniform bound on $|Du_i|_{0;\overline{B_{L'}(0)}}$ independent of $L$. Hence we have
\beq\label{4.11}
|Du|_{0;\overline{B_{L'}(0)}} \leq K_1
\eeq
for some $K_1$ independent of $L$.

By assumption, both $|Da|_{0;\R^n}$ and $|Df_i|_{0;\R^n}$ are bounded. Since $V$ is smooth, and both $|u|_{0;\overline{B_L'(0)}}$ and $|Du|_{0;\overline{B_L'(0)}}$ are bounded independent of $L$, it is clear that $|DF_i|_{0;\overline{B_{L'}(0)}}$ is bounded independent of $L$. It follows that $[F_i]_{\alpha;\overline{B_{L'}(0)}}$ is bounded independent of $L$. For all $\bar{x}\in \overline{B_{L''}(0)}$ we deduce from \cite{GT} Theorem 4.6 that
\begin{equation*}
\begin{split}
&\lt(\frac{1}{3}\rt)^2|D^2u_i|_{0;B_{\frac{1}{3}}(\bar{x})} + \lt(\frac{1}{3}\rt)^{2+\alpha}[D^2u_i]_{\alpha;B_{\frac{1}{3}}(\bar{x})}\\
&\leq C\lt[|u_i|_{0;B_{\frac{2}{3}}(\bar{x})}+\lt(\frac{1}{3}\rt)^2\lt(|F_i|_{0;B_{\frac{2}{3}}(\bar{x})}+\lt(\frac{2}{3}\rt)^{\alpha}[F_i]_{\alpha;B_{\frac{2}{3}}(\bar{x})}\rt)\rt]\\
&\leq C\lt(|u_i|_{0;B_{L'}(0)}+|F_i|_{0;B_{L'}(0)}+[F_i]_{\alpha;B_{L'}(0)}\rt)
\end{split}
\end{equation*}
for some constant $C$ depending only on $n$ and $\alpha$. Since $\bar{x}\in \overline{B_{L''}(0)}$ is arbitrary and the above right hand side is bounded independent of $L$, we conclude that
\beq\label{4.12}
|D^2u|_{0;\overline{B_{L''}(0)}}, \quad [D^2u]_{\alpha;\overline{B_{L''}(0)}} \leq K_2
\eeq
for some $K_2$ independent of $L$. Putting (\ref{4.10}), (\ref{4.11}), (\ref{4.12}) together and setting $K:=\max\{K_1,K_2,K_3\}$, we obtain (\ref{4.14}). 
\end{proof}

\begin{proof}[Proof of Theorem \ref{t4.1}]
We take an increasing sequence $\{L_j\}_j$ with $L_1>2$ and $\lim_{j\rightarrow \infty}L_j=\infty$, and denote by $u_j=u_{L_j}$ the function found in Lemma \ref{l4.1}. We extend $u_j$ to be zero outside $\overline{B_{L_j}(0)}$. Note that $u_j\in C^{2,\alpha}(\overline{B_{L_j}(0)};\R^m)$ but does not need to be smooth on $\R^n$. On each $\overline{B_{L_{p}''}(0)}$, it follows from Lemma \ref{l4.2} that the sequences $\{u_j\}_{j\geq p}$, $\{Du_j\}_{j\geq p}$ and $\{D^2u_j\}_{j\geq p}$ are all uniformly bounded and equicontinuous. Using the diagonal arguments as in the proof of Theorem \ref{thm:1}, one can find a subsequence $\{u_{j_k}\}$ such that $\{u_{j_k}\}$, $\{Du_{j_k}\}$ and $\{D^2u_{j_k}\}$ are all uniformly convergent on all $\overline{B_{L_p''}(0)}$. In particular, there exists $u\in C(\R^n;\R^m)$ such that
\begin{equation}\label{4.15}
u_{j_k}\rightarrow u \quad\text{ uniformly on all bounded domains in } \R^n.
\end{equation}

It is clear from Lemma \ref{l4.2} that $u$ is bounded on $\R^n$. It remains to show that the vector valued function $u$ satisfies the system (\ref{4.1}). Let $\Omega\subset\R^n$ be any bounded convex domain and $i\in\{1,...,m\}$ be any index. Note that $u_{j_k,i}\in C^2(\overline{\Omega})$ for all $k$ sufficiently large, and there exist $v\in C(\overline{\Omega};\R^n)$ and $w\in C(\overline{\Omega};\R^{n\times n})$ such that
\begin{equation}\label{4.16}
\nabla u_{j_k,i}\rightarrow v \quad \text{and} \quad \nabla^2 u_{j_k,i}\rightarrow w \quad \text{uniformly on } \overline{\Omega},
\end{equation}
where $\nabla^2 u_{j_k,i}$ is the Hessian matrix of $u_{j_k,i}$. Fix $x_0\in\Omega$. For any $x\in \Omega$, we have
\begin{equation*}
u_{j_k,i}(x) = u_{j_k,i}(x_0) + \int_{l_{x_0}^x} \nabla u_{j_k,i}(s)\cdot \tau ds,
\end{equation*}
where $l_{x_0}^x$ is the line segment joining $x_0$ and $x$ and $\tau$ is the unit tangent vector of $l_{x_0}^x$. Using (\ref{4.15}) and (\ref{4.16}), we obtain
\begin{equation*}
u_{i}(x) = u_{i}(x_0) + \int_{l_{x_0}^x} v(s)\cdot \tau ds,
\end{equation*}
and therefore $u_i\in C^1(\Omega)$ and $\nabla u_i = v$. Using similar arguments and (\ref{4.16}), we obtain that $v\in C^1(\Omega)$ and $\nabla v  = w$, and hence $u_i\in C^2(\Omega)$ and $\nabla^2 u_i = w$ in $\Omega$. For $k$ sufficiently large, we know $u_{j_k,i}$ solves
\begin{equation*}
\De u_{j_k,i}  - a(x) V_{z_i}(u_{j_k}) = f_i(x) \,, \s\; \mbox{for} \; x\in \Omega.
\end{equation*}
Passing to the limit as $k\rightarrow \infty$, we have
\begin{equation*}
\De u_{i}  - a(x) V_{z_i}(u) = f_i(x) \,, \s\; \mbox{for} \; x\in \Omega.
\end{equation*}
Since this holds for all bounded convex domains $\Omega\in\R^n$, we conclude that $u\in C^2(\R^n;\R^m)$ is a bounded solution of the system (\ref{4.1}).
\end{proof}

\noindent
{\bf Remark 2.} We can apply Theorem \ref{t4.1} to the system given in Example 2, but with smooth $h$ and the functions $a(x), f_1(x), f_2(x)$ satisfying the additional assumptions in Theorem \ref{t4.1}.

\end{document}